\documentclass[12pt]{amsart}
\usepackage{amssymb}
\usepackage{mathrsfs}
\usepackage[headings]{fullpage}

\numberwithin{equation}{section}

\theoremstyle{plain}
\newtheorem{thm}[subsection]{Theorem}

\newtheorem{lem}[subsection]{Lemma}

\theoremstyle{definition}

\theoremstyle{remark}
\newtheorem{rem}[subsection]{Remark}


\def\Z {{\mathbb Z}}

\def\Q {{\mathbb Q}}

\def\F {{\mathbb F}}

\def\CC {{\mathcal C}}
\def\EE {{\mathcal E}}

\def\O {{\mathcal O}}

\newcommand{\GL}{\textnormal{GL}}
\newcommand{\SL}{\textnormal{SL}}

\newcommand{\Aut}{\textnormal{Aut}}
\newcommand{\Char}{\textnormal{Char}}

\newcommand{\Frob}{{\textnormal{Frob}}}
\newcommand{\Gal}{\textnormal{Gal}}

\newcommand{\ord}{{\textnormal{ord}}}

\newcommand{\Spec}{{\textnormal{Spec}}}

\begin{document}
\title[Most Elliptic Curves Over Global Function Fields are Torsion Free]{Most Elliptic Curves Over Global Function Fields are Torsion Free}

\author{Tristan Phillips}
\address{Department of Mathematics \\ University of Arizona
  \\ Tucson, AZ~~85721 USA}
\email{tristanphillips@math.arizona.edu}


\subjclass[2010]{Primary 11G05; Secondary 11F80, 11N36}

\begin{abstract}
Given an elliptic curve $E$ over a global function field $K$, the Galois action on the $n$-torsion points of $E$ gives rise to a mod-n Galois representation $\rho_{E,n}$. For $K$ satisfying some mild conditions, we show that the set of $E$ for which $\rho_{E,n}$ is as large as possible for all $n$, has density $1$.
\end{abstract}

\maketitle

\section{Introduction}

Let $E$ be an elliptic curve defined over a global field $K$. For each positive integer $n$ relatively prime to the characteristic of $K$  there is an action of the absolute Galois group $G_K:=\Gal(K^{sep}/K)$ on the $n$-torsion points $E[n](K^{sep})$ of $E$ which induces a representation
\[
\rho_{E,n}:G_K\to \Aut(E[n](K^{sep}))\cong \GL_{2}(\Z/n\Z).
\]

It is natural to ask how large the image of $\rho_{E,n}$ is. A fundamental result of Serre says that if $E$ is a non-CM elliptic curve over $\Q$, then $\rho_{E,p}$ is surjective for all but finitely many primes $p$ \cite{Ser72}. Duke has shown that the set of elliptic curves over $\Q$ for which $\rho_{E,n}$ is surjective for all $n$ has density 1, when counted by naive height \cite{Duk97}. Call an elliptic curve \emph{torsion free} if its Mordell-Weil group $E(\Q)$ is torsion free. Since $\rho_{E,n}$ being surjective implies $E[n](\Q)=\emptyset$, a pleasing consequence of Duke's result is that the set of torsion free elliptic curves over $\Q$ has density 1.

In order to get analogous results for more general $K$, one must restrict the codomain of $\rho_{E,n}$. Let $\chi_n$ be the cyclotomic character; it is well known that $\chi_n=\det\circ \rho_{E,n}$. Denote by $\Gamma_n\subseteq \GL_2(\Z/n\Z)$ the subgroup determined by the exact sequence 
\[
0\rightarrow\SL_2(\Z/n\Z)\rightarrow \Gamma_n\xrightarrow{\det} \chi_n(G_K) \rightarrow 0.
\]
Zywina has shown that when $K$ is a number field, the set of elliptic curves for which the image of $\rho_{E,n}$ equals $\Gamma_n$ for all $n$ has density 1 \cite{Zyw10}. This has the consequence that, over any number field, the set of torsion free elliptic curves has density 1.

When $K$ is a global function field, a classical result of Igusa says that for any non-isotrivial elliptic curve $E$ and any $n$ relatively prime to $\Char(K)$, the image of $\rho_{E,n}$ equals $\Gamma_n$ for all but finitely many $n$ \cite{Igu59} (see also \cite{BLV09}). It is then natural to ask for analogs of the results of Duke and Zywina. In this article we shall prove such an analog.

\section{Statement of Main Result}

Let $\F_q$ be a finite field of characteristic $p>3$. Let $K$ be the function field of a projective smooth genus $g$ curve $\mathscr{C}$ over $\F_q$. Assume that there exists a degree $1$ rational point  $\infty$ on $\mathscr{C}$ (this assumption is needed in order to apply a version of the large sieve inequality). Let $\O_K$ be the set of functions on $\mathscr{C}$ regular away from $\infty$, let $\ord_\infty:K^\times\to \Z$ be the valuation at $\infty$, and let $K_\infty$ denote the completion of $K$ with respect to the absolute value $|\cdot|_\infty:=q^{-ord_\infty f}$. 

Let $E$ be an elliptic curve defined over $K$. As $\Char(K)>3$, $E$ has a model
\[
E_{(a,b)}:y^2=x^3+ax+b
\]
with $a,b\in \O_K$. Conversely, if $\Delta_{(a,b)}:=-16(4a^3+27b^2)$ is non-zero then $E_{(a,b)}$ defines an elliptic curve over $K$. 

An elliptic curve $E$ over a field of characteristic $p$ is \emph{ordinary} if $E[p](K^{sep})\cong \Z/p\Z$, otherwise $E[p](K^{sep})$ is trivial and we call $E$ \emph{supersingular}. 

For a prime $\ell\neq p$ we define $\rho_{E,\ell}$ as in the introduction.  If $E$ is ordinary then the action of $G_K$ on $E[p]$ induces a representation
\[
\rho_{E,p}:G_K\to \Aut(E[p])\cong (\Z/p\Z)^\times.
\]
If $E$ is supersingular set $\rho_{E,p}(G_K)=\{1\}$.

Define the sets
\begin{align*}
\CC(x)&:=\{(a,b)\in \O_K^2 : \Delta_{(a,b)}\neq 0,\  \max(|a|_\infty,|b|_\infty)\leq q^x\} \\
\EE_\ell(x)&:=\{(a,b)\in \CC(x) : \rho_{E_{(a,b)},\ell}(G_K)\neq\Gamma_\ell\}\\
\EE_p(x)&:=\{(a,b)\in \CC(x) : \rho_{E_{(a,b)},p}(G_K)\neq(\Z/p\Z)^\times \}\\
\EE(x)&:=\bigcup_{\ell} \EE_\ell(x).
\end{align*}
where the union is over all primes, including $\ell=p$.

Let $f,g$ be positive real valued functions. Then $f\sim g$ means that $\lim_{x\to\infty} f(x)/g(x)=1$, and $f\ll g$ means that there exists a constant $c$ such that for all sufficiently large $x$, $f(x)\leq c\cdot g(x)$. We sometimes use the alternate notation $f=O(g)$ for $f\ll g$. Subscripts on $\ll$ and $O$ will be used to denote the variables the implied constant depends on.

Applying the Riemann-Roch theorem to the divisor $x\infty$ on the curve $\mathscr{C}$ we see that
\begin{align}\label{C}
\# \CC(x)\sim c q^{2x}
\end{align}
where $c$ is a positive constant depending on $K$.

The following result of Cojocaru and Hall gives an explicit version of the previously mentioned result of Igusa: 

\begin{thm}{{\cite[Theorem 1.1]{CH05}}}\label{thm:uniformity}
Define a constant, depending only on $g=\textnormal{genus}(K)$, by
\[
c(g):=2+\max\left\lbrace\ell: \frac{\ell-(6+3e_4+4e_3)}{12}\leq g\right\rbrace ,
\]
where 
\[
e_j:=\begin{cases}
1 & \text{ if } \ell\equiv 1\pmod{j}\\
-1 & \text{ else.}
\end{cases}
\]
Then $\rho_{E,\ell}(G_K)$ equals $\Gamma_\ell$ for all $\ell\geq c(g)$.
\end{thm}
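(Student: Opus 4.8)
The plan is to prove the contrapositive. Fix a prime $\ell \neq p$ with $\ell \geq c(g)$, so that (by the genus computation recorded below) the genus $g_0(\ell)$ of the modular curve $X_0(\ell)$ satisfies $g_0(\ell) > g$; I will show that $G := \rho_{E,\ell}(G_K)$ must contain $\SL_2(\F_\ell)$. Since the Weil pairing gives $\det \circ \rho_{E,\ell} = \chi_\ell$, we have $\det(G) = \chi_\ell(G_K)$, so $G$ lies inside the full preimage $\Gamma_\ell = \{M : \det M \in \chi_\ell(G_K)\}$ automatically, and by the defining exact sequence $G = \Gamma_\ell$ is \emph{equivalent} to $\SL_2(\F_\ell) \subseteq G$. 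Thus everything reduces to excluding, for $\ell \geq c(g)$, each way in which the image could fail to contain $\SL_2(\F_\ell)$. I would invoke Dickson's classification: if $\SL_2(\F_\ell) \not\subseteq G$, then either (i) $G$ is reducible (contained in a Borel), (ii) $G$ lies in the normalizer of a split Cartan, (iii) $G$ lies in the normalizer of a nonsplit Cartan, or (iv) the image of $G$ in $\textnormal{PGL}_2(\F_\ell)$ is isomorphic to $A_4$, $S_4$, or $A_5$.

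The essential geometric input is non-isotriviality: $j = j(E)$ is non-constant, so $j \colon \mathscr{C} \to X(1) \cong \mathbb{P}^1$ is finite, $E$ is geometrically non-CM, and $j$ has a pole at some place $v$, where $E$ acquires potentially multiplicative reduction. Examining the Tate parametrization at $v$, the tame inertia image $\rho_{E,\ell}(I_v)$ contains a unipotent element of order $\ell$ (a transvection) whenever $\ell \nmid \ord_v(j)$; since the groups in cases (ii)–(iv) have order prime to $\ell$ for $\ell \geq 7$, this immediately rules those cases out for all but finitely many $\ell$. To make the exclusion \emph{uniform} in $\ell$ I would instead argue geometrically: cases (ii)–(iv) each force a non-constant morphism from $\mathscr{C}$ to the associated modular curve $X_s^+(\ell)$, $X_{ns}^+(\ell)$, or $X_{A_4}(\ell), X_{S_4}(\ell), X_{A_5}(\ell)$, and these subgroups have index in $\GL_2(\F_\ell)$ growing like $\ell^2$ (Cartan) or $\ell^3$ (exceptional). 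By the Hurwitz genus formula their genus exceeds $g_0(\ell) \sim \ell/12$ for all $\ell$ past a small explicit bound that is absorbed into $c(g)$, so a non-constant map into them would force $g > g_0(\ell) > g$, a contradiction.

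This leaves the reducible case (i). Here $G$ stabilizes a line in $E[\ell]$, equivalently $E$ admits a $K$-rational cyclic $\ell$-isogeny, equivalently $E$ gives a non-cuspidal $K$-rational point of $X_0(\ell)$. Because $j$ is non-constant, the composite of this point with the forgetful map $X_0(\ell) \to X(1)$ is $j$ itself, so spreading out over the smooth model yields a non-constant morphism $f \colon \mathscr{C} \to X_0(\ell)$ over $\overline{\F_q}$. As $\ell \neq p$, the curve $X_0(\ell)$ has good reduction at $p$ and keeps its characteristic-zero genus $g_0(\ell) = \tfrac{\ell+1}{12} - \tfrac{1+e_4}{4} - \tfrac{1+e_3}{3} = \tfrac{\ell - (6 + 3e_4 + 4e_3)}{12}$, where $e_4 = \left(\tfrac{-1}{\ell}\right)$ and $e_3 = \left(\tfrac{-3}{\ell}\right)$ record the two elliptic-point counts and there are two cusps. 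Applying Riemann–Hurwitz (the ramification divisor is effective even with wild ramification) gives $2g - 2 \geq \deg(f)\,(2g_0(\ell) - 2) \geq 2g_0(\ell) - 2$ once $g_0(\ell) \geq 1$, hence $g \geq g_0(\ell)$. But $\ell \geq c(g) = 2 + \max\{\ell' : g_0(\ell') \leq g\}$ forces $g_0(\ell) > g$, a contradiction. Therefore $\SL_2(\F_\ell) \subseteq G$ and $G = \Gamma_\ell$.

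The main obstacle I anticipate is the \emph{uniformity} of the exclusion of the Cartan and exceptional cases. The clean transvection argument fails exactly for the finitely many $\ell$ dividing $\ord_v(j)$, and those primes depend on $E$, not merely on $g$; to keep the threshold $c(g)$ independent of $E$ one must therefore replace it by the purely geometric genus-growth comparison and verify a uniform Hurwitz bound for \emph{all} $\ell \geq c(g)$, not just asymptotically. Checking that the various moduli-theoretic $K$-points genuinely spread out to honest non-constant morphisms of $\mathscr{C}$ over $\overline{\F_q}$, with the genera of the relevant modular curves unchanged in characteristic $p$ (valid since $\ell \neq p$), is the technical heart of the argument.
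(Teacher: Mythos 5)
The paper itself contains no proof of this statement: it is imported wholesale from Cojocaru--Hall \cite[Theorem 1.1]{CH05}, so the only meaningful comparison is with that source. Your architecture is the right one: observing that $\det\circ\rho_{E,\ell}=\chi_\ell$ reduces everything to showing $\SL_2(\Z/\ell\Z)\subseteq\rho_{E,\ell}(G_K)$, invoking Dickson's classification, turning each potential obstruction (Borel, Cartan normalizers, exceptional projective image) into a non-constant map from $\mathscr{C}$ to a modular curve whose genus is the same as in characteristic zero because $\ell\neq p$, and contradicting the genus inequality forced by $\ell\geq c(g)$. Indeed, your identification of $\frac{\ell-(6+3e_4+4e_3)}{12}$ as the genus $g_0(\ell)$ of $X_0(\ell)$, so that $c(g)$ is exactly the threshold making $g_0(\ell)>g$, shows the Borel/$X_0(\ell)$ case is the binding one, which is how the constant arises in \cite{CH05} as well. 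You also correctly supply two hypotheses that the statement as transcribed in this paper suppresses: $E$ must be non-isotrivial (for a constant curve the image of $G_K$ is pro-cyclic, so the claim is false as literally stated), and $\ell\neq p$.

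The genuine gap is the one you flag at the end but do not close: the \emph{uniform} exclusion of the Cartan-normalizer and exceptional cases. The constant $c(g)$ is prescribed by the theorem, so nothing can be ``absorbed into $c(g)$,'' and your transvection argument fails exactly for the primes dividing some $\ord_v(j)$, a set depending on $E$. Your proof therefore establishes the theorem only once one verifies: for \emph{every} prime $\ell\geq c(g)$, the curves $X_s^+(\ell)$, $X_{ns}^+(\ell)$ and the exceptional modular curves have genus strictly greater than $g$. Since $g_0(13)=0$ forces $c(g)\geq 15$ for all $g$, it suffices to check that these genera are at least $g_0(\ell)$ for every prime $\ell\geq 17$; this is true --- by the ramification formula for $X_H\to X(1)$ they grow like $\ell^2/24$ for the Cartan normalizers and like a constant times $\ell^3$ in the exceptional cases, versus $\ell/12$ for $X_0(\ell)$ --- but it is precisely the computation you defer, and without it you have proved the theorem only with some unspecified constant larger than $c(g)$. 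Two smaller repairs: in characteristic $p$ the inequality $g\geq g_0(\ell)$ from a non-constant map $\mathscr{C}\to X_0(\ell)$ must account for possibly inseparable maps (factor as a power of Frobenius, which preserves genus, followed by a separable map); your parenthetical about wild ramification addresses a different issue and does not cover this. And the dictionary ``mod-$\ell$ image contained in $H$ up to conjugacy implies a $K$-point of the coarse space $X_H$'' should be justified for the Cartan-normalizer and exceptional $H$ (standard, since those $H$ contain $-I$), not only for $X_0(\ell)$.
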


\noindent We now state our main result:

\begin{thm}\label{thm:Main}
Let $K$ be a genus $g$ global function field of characteristic $p$. If there are no primes $\ell\neq p$ less than $c(g)$ such that $p|(\ell\pm 1)$, then
\[
\frac{\#\EE(x)}{\#\CC(x)}\ll_{K} \frac{x}{q^{x/2}}.
\]
In particular,
\[
\lim_{x\to\infty} \frac{\#\EE(x)}{\#\CC(x)}=0.
\]
\end{thm}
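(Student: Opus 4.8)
The plan is to bound $\#\EE(x)$ one prime at a time and to estimate each contribution by a large sieve over $K$. Since Theorem~\ref{thm:uniformity} forces $\rho_{E,\ell}(G_K)=\Gamma_\ell$ for every \emph{non-isotrivial} $E$ once $\ell\geq c(g)$ and $\ell\neq p$, the first step is to dispose of the isotrivial curves and then reduce to finitely many primes. The isotrivial $(a,b)$ (those whose $j$-invariant lies in the constant field) are cut out by an algebraic relation between $a$ and $b$, and so contribute only $O_K(q^{x})$ pairs to $\CC(x)$, which is negligible against the target $q^{3x/2}$. For the remaining curves Theorem~\ref{thm:uniformity} gives $\EE_\ell(x)=\emptyset$ whenever $\ell\geq c(g)$, $\ell\neq p$, so that
\[
\#\EE(x)\;\ll_K\;\sum_{\substack{\ell<c(g)\\ \ell\neq p}}\#\EE_\ell(x)\;+\;\#\EE_p(x)\;+\;O_K(q^{x}),
\]
a sum of $O_g(1)$ terms. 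As $\#\CC(x)\sim c\,q^{2x}$ by \eqref{C}, it suffices to prove the single-prime estimate $\#\EE_\ell(x)\ll_K x\,q^{3x/2}$ for each of these finitely many $\ell$; because only primes $\ell<c(g)$ ever occur, every implied constant below may depend on $K$ and each $\ell$ may be treated as fixed.

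Fix a prime $\ell\neq p$. If $\rho_{E_{(a,b)},\ell}(G_K)\neq\Gamma_\ell$ then, since the determinant is already forced to be the full cyclotomic image $\chi_\ell(G_K)$, the image is contained in a proper maximal subgroup $H\subsetneq\Gamma_\ell$ whose reduction in $\mathrm{PGL}_2(\mathbb{F}_\ell)$ is of one of the types in Dickson's classification (Borel, normalizer of a split or nonsplit Cartan subgroup, or an exceptional subgroup); for fixed $\ell$ there are only $O(1)$ conjugacy classes of such $H$. For a place $\mathfrak{p}$ of $\O_K$ of good reduction with $\mathfrak{p}\nmid\ell$, the reduced curve $E\bmod\mathfrak{p}$ has Frobenius trace $a_\mathfrak{p}$, and $a_\mathfrak{p}\bmod\ell=\tr\rho_{E,\ell}(\Frob_\mathfrak{p})$ depends only on $(a,b)\bmod\mathfrak{p}$. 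The group-theoretic input is that, for each such $H$ and each fixed determinant class, the trace set $\{\tr h: h\in H\}$ is a \emph{proper} subset of $\mathbb{F}_\ell$; hence there is a residue $t_0=t_0(H)$ with the property that $\mathrm{image}\subseteq H$ forces $a_\mathfrak{p}\not\equiv t_0\pmod\ell$ for every good $\mathfrak{p}$ of the prescribed determinant class. Letting $\Omega_\mathfrak{p}^{(H)}\subseteq(\O_K/\mathfrak{p})^2$ be the set of residues giving trace $\equiv t_0$, the equidistribution of Frobenius traces over elliptic curves mod $\mathfrak{p}$ (Deuring/Eichler--Selberg) yields $\delta_\mathfrak{p}:=\#\Omega_\mathfrak{p}^{(H)}/N(\mathfrak{p})^2\gg 1$, a positive proportion bounded away from $0$ and $1$ uniformly in $\mathfrak{p}$.

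Now I would apply the large sieve to the box $\CC(x)$ --- this is where the degree-one place $\infty$ is used --- taking sieve level $D=x/2$ and sieving by the places $\mathfrak{p}$ of the prescribed determinant class with $\deg\mathfrak{p}\leq x/2$, together with the avoidance sets $\Omega_\mathfrak{p}^{(H)}$. The choice $D=x/2$ keeps the numerator of the sieve bound of size $\asymp q^{2x}$, while by the prime number theorem for $K$ the denominator satisfies $L\geq\sum_{\deg\mathfrak{p}\leq x/2}\tfrac{\delta_\mathfrak{p}}{1-\delta_\mathfrak{p}}\gg q^{x/2}/x$. Since every $(a,b)$ with $\mathrm{image}\subseteq H$ avoids all of the $\Omega_\mathfrak{p}^{(H)}$ in this range, the sieve gives
\[
\#\{(a,b)\in\CC(x):\rho_{E_{(a,b)},\ell}(G_K)\subseteq H\}\;\ll_K\;\frac{q^{2x}}{q^{x/2}/x}\;=\;x\,q^{3x/2}.
\]
Summing over the $O(1)$ classes of maximal $H$ gives $\#\EE_\ell(x)\ll_K x\,q^{3x/2}$.

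It remains to treat $\ell=p$. Here $E[p]$ is one-dimensional, $\rho_{E,p}$ lands in $(\mathbb{Z}/p\mathbb{Z})^\times$, and the ordinary/supersingular dichotomy is read off from $a_\mathfrak{p}\bmod p$ (equivalently from the Hasse invariant). Failure of surjectivity of $\rho_{E,p}$ is again detected by reductions, and the same large sieve produces $\#\EE_p(x)\ll_K x\,q^{3x/2}$; the hypothesis that no prime $\ell<c(g)$ satisfies $p\mid(\ell\pm1)$ is precisely what keeps the associated avoidance sets of positive density, so that the mod-$p$ sieve does not degenerate. Combining everything and dividing by $\#\CC(x)\sim c\,q^{2x}$ yields the stated bound $\#\EE(x)/\#\CC(x)\ll_K x/q^{x/2}$. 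I expect the main obstacle to lie precisely in this characteristic-$p$ case and, more generally, in verifying the uniform lower bound $\delta_\mathfrak{p}\gg1$ and fitting it into the exact hypotheses of the function-field large sieve; by comparison, the reduction to finitely many primes and the final summation are routine.
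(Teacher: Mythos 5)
Your overall architecture is the same as the paper's --- reduce to the finitely many primes $\ell < c(g)$ via the Cojocaru--Hall theorem, then bound each $\#\EE_\ell(x)$ by a function-field large sieve whose local densities come from counting elliptic curves over the residue fields (and your remark about isotrivial curves is a legitimate point of care that the paper glosses over). However, the group-theoretic step on which your $\ell\neq p$ argument rests is false. It is not true that every maximal proper subgroup $H\subsetneq\Gamma_\ell$ with full determinant omits a trace value in each determinant class. Take $H$ to be the normalizer of a nonsplit Cartan subgroup: the determinant-one elements of the Cartan are the $x\in\mu_{\ell+1}\subset\F_{\ell^2}^\times$, with traces $x+x^{-1}$, and for $\ell=5$ these traces are $\{1,2,3,4\}$, while the outer involutions of determinant one contribute trace $0$ --- so the determinant-one trace set of $H$ is \emph{all} of $\F_5$. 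The same failure occurs for $\ell=2$ (the cyclic subgroup of order $3$ in $\GL_2(\F_2)$ has trace set $\{0,1\}$) and $\ell=3$. Since $c(g)\geq 15$ for every $g$, the primes $2,3,5$ must always be handled, so this is not an edge case; for these primes no residue $t_0(H)$ exists and your sieve has nothing to avoid. The paper circumvents this by using Zywina's Lemma A.10: if $\rho_{E,\ell}(G_K)\neq\Gamma_\ell$, then the image misses an entire determinant-one \emph{conjugacy class} $C$ of $\GL_2(\Z/\ell\Z)$, so $\EE_\ell(x)=\bigcup_i W_{C_i}(x)$, and the sieve is run against the sets $\Omega_P$ of reductions whose Frobenius lands in $C$. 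Your argument can be repaired by detecting containment in $H$ through missed conjugacy classes (equivalently, through the trace together with the quadratic character of $t^2-4\delta$, as in Duke's original argument), but as written it breaks down exactly at the primes that can never be discarded.

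You have also misplaced the theorem's hypothesis. The condition that no prime $\ell<c(g)$, $\ell\neq p$, satisfies $p\mid(\ell\pm 1)$ is needed in the $\ell\neq p$ sieve, not the mod-$p$ one: the local densities $\omega_P$ are computed via a Chebotarev density theorem for varieties over finite fields (the paper's Lemma \ref{lem:FiniteCheb}, from Kowalski), whose hypothesis is $p\nmid\#\SL_2(\Z/\ell\Z)=\ell(\ell-1)(\ell+1)$. The mod-$p$ case requires no such assumption: $(\Z/p\Z)^\times$ is cyclic of order $p-1$, prime to $p$, so one fixes a generator $t$, observes that any proper subgroup misses $t$, and sieves with $\Omega_{p,t}$; the paper's Lemma \ref{lem:modp} is unconditional. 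Your proposal instead computes the $\ell\neq p$ densities by ``Deuring/Eichler--Selberg'' equidistribution of traces; if that could be carried out uniformly over the non-prime residue fields $\F_{q^{\deg P}}$ it would actually remove the hypothesis altogether, but that equidistribution statement over general finite fields (where Waterhouse's classification of isogeny classes, supersingular contributions, and traces divisible by $p$ all intervene) is precisely the hard step you leave unverified. So as it stands the proposal never uses the hypothesis where the paper needs it, and asserts it is needed where it is not.
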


\begin{rem}
The condition $p\nmid (\ell\pm 1)$ in the theorem is needed in order to apply a version of the Chebotarev density theorem for varieties over finite fields (Lemma \ref{lem:FiniteCheb}). 
\end{rem}

The theorem implies that the set of torsion free elliptic curves over $K$ has density 1. Our proof uses a multidimensional large sieve for global function fields and is similar to the argument in the number field case.

\section{Large Sieve}
 Define the \emph{degree} of an ideal $I\subset \O_K$ as $\deg(I):=\dim_{\F_q}(\O_K/I\O_K)$. 

Extend $|\cdot|_\infty$ to $K_\infty^2$ by
\[
|(f_1,f_2)|_\infty=\max\{|f_1|_\infty, |f_2|_\infty\}.
\]

The following is a 2-dimensional large sieve inequality for global function fields:

\begin{thm}[Large Sieve]
Let $Q,R\in \Z_{\geq 0}$. For each prime ideal $P$ of $\O_K$ let $\omega_P$ be a real number in $[0,1)$. Let $W$ be a subset of $\O_K^2$ such that 
\[
\# W_P\leq(1-\omega_P) \cdot q^{2 \deg(P)}
\]
where $W_P$ denotes the canonical image of $W$ in $(\O_K/P O_K)^2$. Then
\[
\# \{w\in W: |w|_\infty\leq q^R\} \leq \frac{q^{2\cdot \max\{R+1,2Q+2g\}}}{L(Q)},
\]
where
\[
L(Q)=1+\sum_{I\in S_Q} \prod_{P|M} \frac{\omega_P}{1-\omega_P}
\]
where $S_Q$ denotes the set of squarefree ideals $I\subseteq \O_K$ with $\deg(I)\leq Q$.
\end{thm}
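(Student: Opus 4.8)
The plan is to follow the classical two-step route to a large sieve inequality: first an \emph{analytic} estimate bounding a sum of squares of additive-character sums over the box, and then an \emph{arithmetic} deduction in which one specializes the free coefficients to the indicator function of the sieved set and extracts the factor $L(Q)$ by orthogonality. Fix a nontrivial additive character $\psi$ of $K_\infty$ that is trivial on $\O_K$; for each ideal $I$ and each $\alpha\in(\O_K/I)^2$ this produces a character $w\mapsto\psi_I(\alpha\cdot w)$ of $(\O_K/I)^2$, where $\alpha\cdot w=\alpha_1w_1+\alpha_2w_2$. Write $B_R=\{w\in\O_K^2:|w|_\infty\leq q^R\}$ and note that $B_R$ is the square of the Riemann--Roch space $H^0(\mathscr C,\O(R\infty))$, hence a finite $\F_q$-vector space; this structure is what makes the relevant orthogonality relations exact rather than merely approximate.

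Step 1 (analytic inequality). I would prove that for any complex coefficients $(c_w)_{w\in B_R}$,
\[
\sum_{I\in S_Q}\ \sum_{\substack{\alpha\in(\O_K/I)^2\\ \alpha\ \mathrm{primitive}}}\Bigl|\sum_{w\in B_R}c_w\,\psi_I(\alpha\cdot w)\Bigr|^2\ \leq\ q^{2\max\{R+1,\,2Q+2g\}}\sum_{w\in B_R}|c_w|^2 .
\]
By the duality principle this is equivalent to the transposed inequality, which I would open up by expanding the square. The diagonal terms contribute $\#B_R\leq q^{2(R+1)}$ per frequency (Riemann--Roch), while the off-diagonal terms, controlled by the exact vanishing of nontrivial additive characters summed over the $\F_q$-vector space $B_R$ together with the $\infty$-adic separation of the fractions $\alpha/I$ with $\deg I\leq Q$, contribute at the competing scale $q^{2(2Q+2g)}$; the $2g$ arises from the Riemann--Roch correction in the separation estimate. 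Balancing the two regimes yields the stated maximum.

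Step 2 (arithmetic deduction). Put $Z=\{w\in W:|w|_\infty\leq q^R\}$ and apply Step 1 with $c_w=\mathbf 1_{w\in Z}$, so that $\sum_w|c_w|^2=\#Z$. For each squarefree $I$, Plancherel on the finite abelian group $(\O_K/I)^2$ together with the Chinese Remainder Theorem rewrites the inner sum over primitive $\alpha$ as a multiplicative variance of $Z$ across residue classes modulo the primes $P\mid I$. The hypothesis $\#W_P\leq(1-\omega_P)q^{2\deg P}$ forces $Z$ to avoid a proportion at least $\omega_P$ of the classes modulo each such $P$, and a Cauchy--Schwarz estimate of the variance (Montgomery's lemma) gives the lower bound
\[
\sum_{\substack{\alpha\in(\O_K/I)^2\\ \alpha\ \mathrm{primitive}}}\Bigl|\sum_{w\in Z}\psi_I(\alpha\cdot w)\Bigr|^2\ \geq\ (\#Z)^2\prod_{P\mid I}\frac{\omega_P}{1-\omega_P}.
\]
Summing over $I\in S_Q$ and adjoining the trivial character (the term $I=\O_K$, contributing $(\#Z)^2$) bounds the left-hand side of Step 1 below by $(\#Z)^2L(Q)$. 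Comparing with the upper bound $q^{2\max\{R+1,2Q+2g\}}\,\#Z$ and cancelling one factor of $\#Z$ yields the theorem.

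The main obstacle is Step 1. The arithmetic deduction is essentially bookkeeping once the variance identity is in place, and the exact vanishing of character sums over Riemann--Roch spaces makes the function-field orthogonality cleaner than its counterpart over $\Q$. The real work lies in the separation estimate: showing that distinct fractions $\alpha/I$ with $\deg I\leq Q$ are $\infty$-adically spaced at scale $q^{-(2Q+2g)}$, correctly tracking the genus-dependent $2g$ through Riemann--Roch, and bounding the off-diagonal character sums in the dual form so as to land on exactly $q^{2\max\{R+1,2Q+2g\}}$. The two-dimensional setting adds only the systematic replacement of $q^{\deg}$ by $q^{2\deg}$, but this must be carried through every step consistently.
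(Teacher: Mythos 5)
Your outline is correct and is essentially the same argument the paper relies on: the paper's ``proof'' is a one-line citation to Hsu (\cite{Hsu96}, \cite{Hsu99}), whose function-field large sieve is proved by exactly this two-step scheme --- an analytic inequality via additive characters at $\infty$, duality, exact orthogonality over the Riemann--Roch spaces $H^0(\mathscr{C},\O(R\infty))$, and $\infty$-adic spacing of the fractions $\alpha/I$, followed by Montgomery's arithmetic deduction producing $L(Q)$. The places where you flag the real work (the spacing estimate, the genus correction $2g$ entering through Riemann--Roch and the conductor of the residue character, and the need for the degree-one place $\infty$) are precisely the points where Hsu's $\F_q(T)$ argument must be checked to carry over to general $K$, which is what the paper asserts happens verbatim.
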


\begin{proof}
The proof of the $K=\F_q(T)$ case given in \cite[Theorem 3.2]{Hsu96} carries over verbatim for our more general $K$ (see \cite[Theorem 3.2]{Hsu99} for the 1-dimensional case). 
\end{proof}

\section{Elliptic Curves over Finite Fields}

Let $E$ be an elliptic curve over $\F_{q^n}$. For a rational prime $\ell\neq p$ we get a representation
\[
\overline{\rho}_{E,\ell}:\Gal(\overline{\F}_{q^n}/\F_{q^n})\to\Gamma_\ell\subset\GL_2(\Z/\ell\Z),
\]
and for $\ell=p$,
\[
\overline{\rho}_{E,p}:\Gal(\overline{\F}_{q^n}/\F_{q^n})\to(\Z/p\Z)^\times
\]
where $\overline{\rho}_{E,p}(\overline{\F}_{q^n}/\F_{q^n})=\{1\}$ if $E$ is supersingular.

For $(a,b)\in \F^2_{q^n}$ with $\Delta_{(a,b)}:=-16(4a^3+27b^2)$ non-zero, let $E_{(a,b)}$ be the elliptic curve 
\[
E_{(a,b)}:y^2=x^3+ax+b.
\]

For $\ell\neq p$ and $C$ a conjugacy class of $\Gamma_\ell$ set 
\[
\Omega_{\ell,C}(n):=\{(a,b)\in \F_{q^n}^2 : \Delta_{(a,b)}\neq 0,\ \overline{\rho}_{E_{(a,b)},\ell}(\Frob_{q^n})\in C\}.
\]

For $\ell=p$ and $t\in(\Z/p\Z)^\times$ set
\[
\Omega_{p,t}(n):=\{(a,b)\in \F_{q^n}^2 : \Delta_{(a,b)}\neq 0,\ \overline{\rho}_{E_{(a,b)},p}(\Frob_{q^n})=t\}
\]


 Applying the Chebotarev density theorem for finite fields, as given in \cite[Theorem 1]{Kow06}, with $U=\Spec(\F_q[a,b,1/(4a^3+27b^2)])$ gives:

\begin{lem}\label{lem:FiniteCheb}
For any $\ell$ such that $p\nmid \#\SL_2(\Z/\ell\Z)=\ell(\ell+1)(\ell-1)$, let $C\subset \Gamma_\ell$ be a subset closed under conjugation. Then, for any $n$ such that $q^n\equiv \det(C)\pmod{\ell}$,
\[
\#\Omega_{\ell,C}(n)=\frac{\# C}{\#\SL_2(\Z/\ell\Z)}q^{2n}+O\left(q^{3n/2}\cdot\sqrt{\# C\cdot \# \GL_2(\Z/\ell\Z)^3}\right)
\]
where the implied constant is absolute.
For $\ell=p$ and $t\in (\Z/p\Z)^\times$, 
\[
\#\Omega_{p,t}(n)=\frac{1}{p-1}q^{2n}+O\left(q^{3n/2}(p-1)^{3/2}\right)
\]
where the implied constant is absolute.
\end{lem}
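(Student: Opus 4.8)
The plan is to deduce both estimates from the effective Chebotarev density theorem for varieties over finite fields, exactly as the surrounding text indicates, by applying \cite[Theorem 1]{Kow06} to the affine surface $U=\Spec(\F_q[a,b,1/(4a^3+27b^2)])$. This $U$ is smooth and geometrically irreducible over $\F_q$ of dimension $2$, being the complement of the discriminant locus in $\mathbb{A}^2$; over it sits the universal Weierstrass curve $\mathscr{E}\colon y^2=x^3+ax+b$, whose fibre over $(a,b)\in U(\F_{q^n})$ is $E_{(a,b)}$ and whose geometric Frobenius at that point is $\overline{\rho}_{E_{(a,b)},\ell}(\Frob_{q^n})$. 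Counting the sets $\Omega_{\ell,C}(n)$ is therefore the same as counting $\F_{q^n}$-points of $U$ whose Frobenius conjugacy class lies in $C$.

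For $\ell\neq p$ the group scheme $\mathscr{E}[\ell]$ is finite étale over $U$, and the resulting representation $\pi_1(U)\to\GL_2(\Z/\ell\Z)$ has geometric monodromy group $\SL_2(\Z/\ell\Z)$ (the classical maximality for the universal family, which underlies Theorem \ref{thm:uniformity}) and arithmetic monodromy group $\Gamma_\ell$, the latter because the determinant is the cyclotomic character, whose image is generated by $q\bmod\ell$. Consequently the Frobenius class of any $\F_{q^n}$-point lies in the single coset of $\SL_2(\Z/\ell\Z)$ on which $\det$ takes the value $q^n\bmod\ell$. The hypothesis $q^n\equiv\det(C)\pmod{\ell}$ is precisely the requirement that $C$ be contained in this coset; when it fails $\Omega_{\ell,C}(n)$ is empty, and when it holds equidistribution takes place inside a coset of size $\#\SL_2(\Z/\ell\Z)$, which is why the denominator of the main term is $\#\SL_2(\Z/\ell\Z)$ rather than $\#\Gamma_\ell$.

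I would then apply \cite[Theorem 1]{Kow06} directly. The hypothesis $p\nmid\#\SL_2(\Z/\ell\Z)=\ell(\ell-1)(\ell+1)$—equivalently $p\nmid(\ell\pm1)$, the condition flagged in the remark after Theorem \ref{thm:Main}—makes the degree of the cover prime to $p$, which is the tameness needed for the theorem. The theorem returns the main term $\tfrac{\#C}{\#\SL_2(\Z/\ell\Z)}q^{2n}$ together with an error of order $q^{n(\dim U-1/2)}=q^{3n/2}$ times $\sqrt{\#C}$ and a factor recording the geometric complexity of the cover. The one point requiring care, and the main obstacle, is to bound this last factor—essentially the sum of the Betti numbers, or the conductor, of the lisse sheaf attached to the cover—uniformly in $\ell$, and to verify that it is dominated by $\sqrt{\#\GL_2(\Z/\ell\Z)^3}$; this amounts to controlling the ramification of the $\ell$-torsion cover along the boundary divisor, and is supplied by the explicit form of the bound in \cite[Theorem 1]{Kow06}.

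The case $\ell=p$ runs along the same lines over the ordinary locus $U^{\mathrm{ord}}\subset U$. There the maximal étale quotient of $\mathscr{E}[p]$ is a rank-one étale group scheme, giving a representation with values in $(\Z/p\Z)^\times$ whose associated cover of $U^{\mathrm{ord}}$ is finite étale with abelian Galois group of order $p-1$, again prime to $p$. The supersingular locus is a proper closed subset of $U$ and so contributes only to the error. Applying \cite[Theorem 1]{Kow06} with $\#(\Z/p\Z)^\times=p-1$ then yields the main term $\tfrac{1}{p-1}q^{2n}$ and an error $q^{3n/2}(p-1)^{3/2}$, the factor $(p-1)^{3/2}$ playing for this much smaller group the role that $\sqrt{\#\GL_2(\Z/\ell\Z)^3}$ plays above.
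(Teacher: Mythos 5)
Your proposal is correct and follows exactly the paper's route: the paper proves this lemma simply by invoking the quantitative Chebotarev density theorem of \cite[Theorem 1]{Kow06} applied to $U=\Spec(\F_q[a,b,1/(4a^3+27b^2)])$, with the hypothesis $p\nmid\ell(\ell-1)(\ell+1)$ supplying the tameness that theorem requires. Your elaboration of the monodromy input (geometric group $\SL_2(\Z/\ell\Z)$ by Igusa, arithmetic group $\Gamma_\ell$, the determinant-coset condition explaining the denominator $\#\SL_2(\Z/\ell\Z)$, and the Igusa tower over the ordinary locus for $\ell=p$) fills in precisely the details the paper leaves to the citation.
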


\begin{rem}
The arguments in the proof of Theorem 8 in \cite{Jon10} can be generalized to give a proof of Lemma \ref{lem:FiniteCheb} different from Kowalski's proof. 
\end{rem}

\section{Estimating $\#\EE_\ell(x)$}

For $\ell\neq p$ and $d$ relatively prime to $\ell$, let $\Sigma_K(Q;\ell,d)$ denote the set of prime ideals $P\subset \O_K$ with $\deg(P)\leq Q$ and 
\[
q^{\deg(P)}\equiv d\pmod{ \ell}.
\]
We may suppose $q^Q\equiv d\pmod{\ell}$; if not we can decrease $Q$ so that it satisfies this condition. Under this assumption, the prime polynomial theorem implies
\begin{align}\label{ppnt}
\frac{q^{Q}}{Q} \ll \#\Sigma_K(Q;\ell,d) 
\end{align}
where the implied constant is absolute.

\begin{lem}\label{lem:modl}
For any prime $\ell\neq p$ satisfying $p\nmid (\ell\pm 1)$,
\[
\frac{\# \EE_\ell(x)}{\# \CC(x)} \ll_{K,\ell} \frac{x}{q^{x/2}}.
\]
\end{lem}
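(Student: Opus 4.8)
The plan is to bound $\#\EE_\ell(x)$ using the large sieve inequality from Section 3, feeding it local densities coming from the Chebotarev estimate of Lemma \ref{lem:FiniteCheb}. The sieve is designed to count $(a,b)\in\O_K^2$ that avoid a prescribed set of residues modulo many primes $P$; here the set we wish to sieve out consists of those $(a,b)$ for which $\rho_{E_{(a,b)},\ell}(G_K)$ is a \emph{proper} subgroup of $\Gamma_\ell$, i.e. those in $\EE_\ell(x)$. The mechanism is the standard one: if the mod-$\ell$ image is small, then upon reducing modulo a prime $P$ of good reduction, the Frobenius conjugacy class $\overline{\rho}_{E_{(a,b)},\ell}(\Frob_P)$ is forced to avoid certain conjugacy classes of $\Gamma_\ell$ that a surjective representation would hit. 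Thus for each sieving prime $P$ I can exhibit a nonempty union of conjugacy classes $C\subset\Gamma_\ell$ that $(a,b)\in\EE_\ell(x)$ must avoid modulo $P$, giving a genuine saving $\omega_P>0$.

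First I would fix a conjugacy class (or a small union of conjugacy classes) $C\subset\Gamma_\ell$ whose elements can only occur as a Frobenius image when the mod-$\ell$ representation is surjective onto $\Gamma_\ell$; a natural choice is a class generating $\Gamma_\ell$ modulo $\SL_2$, detected by a suitable trace/determinant condition, so that membership in a proper subgroup is ruled out. For each prime $P\in\Sigma_K(Q;\ell,d)$ with $q^{\deg(P)}\equiv d\pmod\ell$ chosen so that $\det(C)\equiv q^{\deg P}\pmod\ell$, Lemma \ref{lem:FiniteCheb} applies over the residue field $\F_{q^{\deg P}}$ and shows that the proportion of $(a,b)$ landing in $C$ is $\#C/\#\SL_2(\Z/\ell\Z)+O(q^{-\deg(P)/2}\cdot\ell^{O(1)})$. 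Consequently the image $W_P$ of $\EE_\ell(x)$ in $(\O_K/P)^2$ omits this $C$-locus, so I may take $\omega_P$ to be a fixed positive constant $\omega_\ell$ (depending only on $\ell$) for all $P$ of sufficiently large degree, since for large $\deg P$ the main term dominates the error. The congruence condition on $P$ is exactly why I restrict to $\Sigma_K(Q;\ell,d)$, and the prime polynomial theorem estimate \eqref{ppnt} guarantees that this set has $\gg q^Q/Q$ primes.

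With a uniform lower bound $\omega_P\geq\omega_\ell$ over roughly $q^Q/Q$ primes, the sieve quantity $L(Q)$ is bounded below by summing $\prod_{P\mid I}\omega_\ell/(1-\omega_\ell)$ over squarefree $I$ supported on these primes with $\deg(I)\leq Q$. Taking $I$ to range over the individual primes already gives $L(Q)\gg (q^Q/Q)$, and by combining several primes one can do somewhat better, but even the single-prime bound suffices. Applying the Large Sieve with $R=x$ and optimizing $Q$ (choosing $Q\approx x/2$, so that $\max\{R+1,2Q+2g\}=R+1$ up to lower order) yields
\[
\#\EE_\ell(x)\leq\frac{q^{2(x+1)}}{L(Q)}\ll_{K,\ell}\frac{q^{2x}\cdot Q}{q^{Q}}\ll_{K,\ell}\frac{q^{2x}\cdot x}{q^{x/2}}.
\]
Dividing by $\#\CC(x)\sim c\,q^{2x}$ from \eqref{C} gives the claimed bound $\#\EE_\ell(x)/\#\CC(x)\ll_{K,\ell} x/q^{x/2}$.

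The main obstacle I anticipate is verifying the local hypothesis cleanly and uniformly: I must produce, for almost every admissible prime $P$, a fixed positive lower bound $\omega_\ell$ on the omitted proportion, which requires (i) identifying a conjugacy class $C$ in $\Gamma_\ell$ that genuinely cannot arise as a Frobenius image inside a proper subgroup of the relevant type, and (ii) ensuring the Chebotarev error term in Lemma \ref{lem:FiniteCheb} is beaten by the main term for all $P$ of large enough degree. Point (i) is where the group theory of $\Gamma_\ell$ (and the role of the hypothesis $p\nmid(\ell\pm1)$, which is what lets Lemma \ref{lem:FiniteCheb} apply at all) does the real work; point (ii) is a matter of choosing a degree threshold $\deg P\gg_\ell 1$ below which primes are simply discarded, costing only an $O_\ell(1)$ factor. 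Once $\omega_\ell$ is secured, the rest is the bookkeeping sketched above.
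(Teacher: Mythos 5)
There is a genuine gap at the core of your argument: the conjugacy class you need in step (i) does not exist. You ask for a nonempty conjugation-closed set $C\subset\Gamma_\ell$ whose elements ``can only occur as a Frobenius image when the mod-$\ell$ representation is surjective,'' i.e.\ a $C$ disjoint from \emph{every} proper subgroup of $\Gamma_\ell$ that can arise as an image. But every element $g\in\Gamma_\ell$ lies in such a proper subgroup: recall that images of $\rho_{E,\ell}$ are exactly the subgroups $H\subseteq\Gamma_\ell$ with $\det(H)=\det(\Gamma_\ell)$, and if $g$ has a rational eigenvalue it lies in (the intersection with $\Gamma_\ell$ of) a Borel subgroup, while if its characteristic polynomial is irreducible it lies in a non-split Cartan; in both cases the determinant map is still surjective onto $\det(\Gamma_\ell)$ (since $\Gamma_\ell$ is the full preimage of $\chi_\ell(G_K)$ under $\det$, one can adjoin $\mathrm{diag}(d,1)$ in the Borel case, and the norm map is surjective in the Cartan case), so these are proper subgroups of full determinant image containing $g$. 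Hence the only $C$ avoided by all of $\EE_\ell(x)$ simultaneously is empty, your uniform $\omega_\ell>0$ cannot be secured, and a single application of the sieve to $\EE_\ell(x)$ yields nothing. The same objection kills the ``small union of conjugacy classes'' variant.

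The fix is the decomposition the paper uses, which replaces ``one class avoided by all proper subgroups'' (false) with ``each proper subgroup avoids some class'' (true). By \cite[Lemma A.10]{Zyw10}, any subgroup of $\Gamma_\ell$ meeting every determinant-one conjugacy class $C_1,\dots,C_m$ of $\GL_2(\Z/\ell\Z)$ contains $\SL_2(\Z/\ell\Z)$, hence equals $\Gamma_\ell$; therefore $\EE_\ell(x)=\bigcup_{i=1}^m W_{C_i}(x)$, where $W_{C_i}(x)$ is the set of $(a,b)\in\CC(x)$ whose mod-$\ell$ image misses $C_i$. One then runs your sieve argument (which is otherwise correct: $R=x$, $Q=x/2$, primes in $\Sigma_K(Q;\ell,d)$ with $d=\det(C_i)$ so that Lemma \ref{lem:FiniteCheb} applies, local densities $\omega_P=\#C_i/\#\SL_2(\Z/\ell\Z)+O(q^{-\deg(P)/2})$, and the bound $L(Q)\gg \#\Sigma_K(Q;\ell,d)\gg q^{Q}/Q$ from \eqref{ppnt}) \emph{separately for each fixed} $C_i$: membership in $W_{C_i}(x)$ is precisely the avoidance condition needed to bound $\#W_P$, with the avoided class now constant across all sieving primes. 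Summing the resulting bounds $\#W_{C_i}(x)\ll_{K,\ell} q^{2x}\,x/q^{x/2}$ over $i=1,\dots,m$ (a number depending only on $\ell$) and dividing by $\#\CC(x)\sim c\,q^{2x}$ gives the lemma. Your bookkeeping in the last two paragraphs then goes through verbatim; only the group-theoretic input had to change.
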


\begin{proof}

Let $C$ be a conjugacy class of $\GL_2(\Z/\ell\Z)$ with $d=\det(C)$. Set
\[
W_C(x):=\{(a,b)\in \CC(x) : \rho_{E_{(a,b)},\ell}(G_K\cap C)=\emptyset\}.
\]
Let $C_1,\dots,C_m$ be the determinant 1 conjugacy classes of $\GL_2(\Z/\ell\Z)$. 
By \cite[Lemma A.10]{Zyw10} $\EE_\ell(x)=\bigcup_{i=1}^m W_{C_i}(x)$. Hence
\[
\frac{\# \EE_\ell(x)}{\# \CC(x)}\leq \sum_{i=1}^m \frac{\# W_{C_i}(x)}{\# \CC(x)}.
\]

We now use the large sieve to estimate $\# W_C(x)$. Take $R=x$ and $Q=x/2$. For $P\in \Sigma_K(Q;\ell,d)$ set $\F_P=\O_K/P\O_K$, $\Omega_P:=\Omega_C(\deg(P))$, and
\[
\omega_P:=\frac{\#\Omega_P}{q^{2\deg(P)}}.
\]
Denote by $W_P$ the image of $W_C(X)$ in $\F_P^2$. 

Let $\Frob_P$ denote the $q^{\deg(P)}$ Frobenius endomorphism. If $(a,b)\in\EE_\ell(x)$ is such that $(a,b)\mod P$ is in $\Omega_P$ then $\overline{\rho}_{E_{(a,b)},\ell}(\Frob_P)\in C$ implies $(a,b)\notin W_C(x)$. Hence $W_P\subset \F_P^2\setminus \Omega_P$, which shows $\# W_P\leq (1-\omega_P)\cdot q^{2\deg(P)}$. Therefore, by the large sieve inequality,
\[
\# W_C(x) \leq \frac{q^{2\cdot \max\{x+1,x+2g\}}}{L(Q)}\ll_{q,g} \frac{q^{2x}}{L(Q)}
\]
where 
\[
L(Q)= 1+\sum_{I\in S_Q} \prod_{P|M} \frac{\omega_P}{1-\omega_P} \geq \sum_{P\in \Sigma_K(Q;N,A)} \omega_P.
\]
But by Lemma \ref{lem:FiniteCheb},
\[
\omega_P=\frac{\# \Omega_C(\deg(P))}{q^{2 \deg(P)}}=\frac{\# C}{\#\SL_2(\Z/\ell\Z)}+O\left(\frac{\sqrt{\#C\cdot \#\GL_2(\Z/\ell\Z)^3}}{q^{\deg(P)/2}}\right).
\]

Therefore
\begin{align*}
L(Q)&\geq \sum_{P\in \Sigma_K(Q;N,A)}\left(\frac{\# C}{\#\SL_2(\Z/\ell\Z)}+O\left(\frac{\sqrt{\#C\cdot \#\GL_2(\Z/\ell\Z)^3}}{q^{\deg(P)/2}}\right)\right)\\
&\sim \#\Sigma_K(Q;N,A) \frac{\# C}{\#\SL_2(\Z/\ell\Z)}.
\end{align*}

Hence
\[
\# W_C(x)\ll_{q,g,\ell} \frac{q^{2x}}{ \# \Sigma_K(Q;N,A)}.
\]

From this, together with (\ref{ppnt}) and (\ref{C}), we find that
\begin{align*}
\frac{\# \EE_\ell(x)}{\# \CC(x)} &\leq \sum_{i=1}^m \frac{\# W_{C_i}(x)}{\# \CC(x)}
 \ll_{K,\ell} \sum_{i=1}^m \frac{1}{ \# \Sigma_K(Q;N,A)}
\ll \frac{x}{q^{x/2}}.
\end{align*}

\end{proof}

\section{Estimating $\#\EE_p(x)$}

Let $\Sigma_K(Q)$ denote the set of prime ideals $P\subset \O_K$ with $\deg(P)\leq Q$. By the prime polynomial theorem
\begin{align}\label{ppt}
\frac{q^{Q}}{Q} \ll \#\Sigma_K(Q)
\end{align}
where the implied constant is absolute.

\begin{lem}\label{lem:modp}
\[
\frac{\# \EE_p(x)}{\# \CC(x)} \ll_K \frac{x}{q^{x/2}}.
\]
\end{lem}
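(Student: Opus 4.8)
The plan is to adapt the argument of Lemma \ref{lem:modl} to the mod-$p$ representation, accounting for the fact that $\rho_{E,p}$ takes values in the abelian group $(\Z/p\Z)^\times$ rather than in $\GL_2$. Since the image $\rho_{E_{(a,b)},p}(G_K)$ is a subgroup of the cyclic group $(\Z/p\Z)^\times$ of order $p-1$, it fails to equal all of $(\Z/p\Z)^\times$ exactly when it lies in one of the maximal proper subgroups. For each prime $r \mid (p-1)$ let $H_r \subset (\Z/p\Z)^\times$ be the unique subgroup of index $r$. Then I would write $\EE_p(x) = \bigcup_{r \mid (p-1)} V_r(x)$, where $V_r(x) := \{(a,b) \in \CC(x) : \rho_{E_{(a,b)},p}(G_K) \subseteq H_r\}$, so that $\#\EE_p(x) \leq \sum_{r \mid (p-1)} \#V_r(x)$. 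Since the number of primes $r \mid (p-1)$ is $O_K(1)$, it suffices to bound each $\#V_r(x)$ individually.

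For a fixed $r$ I would estimate $\#V_r(x)$ with the large sieve exactly as in Lemma \ref{lem:modl}, taking $R = x$ and $Q = x/2$. Here there is a simplification relative to the $\ell \neq p$ case: the $\ell=p$ count in Lemma \ref{lem:FiniteCheb} holds for every $n$ with no congruence condition (reflecting the triviality of the mod-$p$ cyclotomic character in characteristic $p$), so one may sieve over the full set $\Sigma_K(Q)$ rather than over primes in a fixed residue class. For $P \in \Sigma_K(Q)$ I would set $\Omega_P := \{(a,b) \in \F_P^2 : \Delta_{(a,b)} \neq 0,\ \overline{\rho}_{E_{(a,b)},p}(\Frob_P) \notin H_r\}$ and $\omega_P := \#\Omega_P / q^{2\deg(P)}$. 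If $(a,b) \bmod P \in \Omega_P$, then $\rho_{E_{(a,b)},p}(G_K)$ contains an element outside $H_r$ and hence $(a,b) \notin V_r(x)$; thus the image $W_P$ of $V_r(x)$ in $\F_P^2$ avoids $\Omega_P$, giving $\#W_P \leq (1-\omega_P)q^{2\deg(P)}$ as the sieve requires.

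The density $\omega_P$ is then read off from the $\ell = p$ case of Lemma \ref{lem:FiniteCheb}: summing $\#\Omega_{p,t}(\deg P)$ over the $(p-1)(r-1)/r$ elements $t \notin H_r$ gives $\omega_P = \frac{r-1}{r} + O_K\!\left(q^{-\deg(P)/2}\right)$, which is bounded away from $0$ and from $1$ since $2 \le r \le p-1$. As in Lemma \ref{lem:modl}, the error terms sum to a quantity negligible against the main term, so $L(Q) \geq \sum_{P \in \Sigma_K(Q)} \omega_P \gg \#\Sigma_K(Q)$, and the large sieve yields $\#V_r(x) \ll_K q^{2x}/\#\Sigma_K(Q)$. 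Feeding in the prime polynomial theorem (\ref{ppt}) in the form $\#\Sigma_K(Q) \gg q^{x/2}/x$ together with (\ref{C}) produces $\#V_r(x)/\#\CC(x) \ll_K x/q^{x/2}$, and summing over the $O_K(1)$ primes $r \mid (p-1)$ gives the claim.

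The step demanding the most care is the compatibility invoked in the second paragraph. Unlike the $\ell \neq p$ situation, here the residue characteristic of $P$ equals the torsion level $p$, so the identification of the global $\rho_{E_{(a,b)},p}(\Frob_P)$ with the Frobenius action on the $p$-torsion of the reduction is an equal-characteristic statement about the \'etale quotient of the $p$-divisible group at a prime of good ordinary reduction. I would need to verify that this comparison is valid, and check that supersingular reductions, which contribute $\overline{\rho}_{E,p}(\Frob_P) = 1 \in H_r$ and hence never lie in $\Omega_P$, do not interfere with the inclusion $W_P \subseteq \F_P^2 \setminus \Omega_P$. Once this compatibility is in hand, the remaining estimates are routine adaptations of Lemma \ref{lem:modl}, with the abelian structure of $(\Z/p\Z)^\times$ making the combinatorics strictly simpler than the $\GL_2$ case.
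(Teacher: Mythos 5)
Your proposal is correct, and all of its analytic machinery --- sieving over the full set $\Sigma_K(Q)$ with $R=x$, $Q=x/2$, the $\ell=p$ case of Lemma \ref{lem:FiniteCheb}, the lower bound $L(Q)\gg\#\Sigma_K(Q)$, and the prime polynomial theorem (\ref{ppt}) --- coincides with the paper's proof. Where you differ is the group-theoretic reduction. You decompose $\EE_p(x)=\bigcup_{r\mid(p-1)}V_r(x)$ over the maximal subgroups $H_r\subset(\Z/p\Z)^\times$, the direct analogue of the conjugacy-class decomposition (via Zywina's Lemma A.10) used in Lemma \ref{lem:modl}, and run the sieve once for each prime $r\mid(p-1)$. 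The paper instead exploits cyclicity in a single stroke: fixing a generator $t$ of $(\Z/p\Z)^\times$, a subgroup is proper if and only if it omits $t$, so $\EE_p(x)$ \emph{equals} $W_t(x):=\{(a,b)\in\CC(x):t\notin\rho_{E_{(a,b)},p}(G_K)\}$, and one sieve application with $\Omega_P=\Omega_{p,t}(\deg(P))$ and density $\omega_P\approx 1/(p-1)$ finishes the proof. What each buys: the paper's version is shorter (an exact identity rather than a union bound, and a single sieve); yours has sieve densities $(r-1)/r\geq 1/2$ instead of $1/(p-1)$, so if one tracked the dependence on $p$ it would yield an implied constant of size roughly $\log p$ rather than $p$ --- immaterial for the stated lemma, since $p$ is fixed with $K$. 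Finally, the equal-characteristic compatibility you flag (that $\overline{\rho}_{E\bmod P,p}(\Frob_P)$ lies in the global image at primes of good ordinary reduction, via the \'etale quotient of the $p$-divisible group) is exactly the step the paper asserts without comment in the line ``$\overline{\rho}_{E_{(a,b)},p}(\Frob_P)=t$ implies $(a,b)\notin W_t(x)$,'' so your caution is warranted but not a gap peculiar to your argument; and your remark that supersingular reductions have Frobenius image $1\in H_r$, hence never lie in $\Omega_P$ and never threaten the inclusion $W_P\subseteq\F_P^2\setminus\Omega_P$, correctly disposes of the only degenerate case.
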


\begin{rem}
This result says ``$\rho_{E,p}$ is usually surjective." It is interesting to compare this with Igusa's theorem, which implies that the mod $p$ Galois representation of the \emph{universal elliptic curve} over $K$ is surjective \cite{Igu68}. 
\end{rem}

\begin{proof}

We again use the large sieve. Let $t\in (\Z/p\Z)^\times$ be a generator. The setup is similar to the $\ell\neq p$ case:
\begin{align*}
W_t(x)&:=\{(a,b)\in \CC(x) : t\not\in\rho_{E_{(a,b)},p}(G_K)\}.\\
R&=x\\
Q&=x/2.
\end{align*}
For each prime $P\subset \O_K$,
\begin{align*}
\Omega_P&:=\Omega_{p,t}(\deg(P))\\
\omega_P&:=\frac{\#\Omega_P}{q^{2\deg(P)}}\\
W_P&:=\text{image of $W_t(X)$ in $(\O_K/P \O_K)^2$}
\end{align*}

Note that $W_t(x)=\EE_p(x)$.
 
If $(a,b)\in\CC(x)$ is such that $(a,b)\mod P$ is in $\Omega_P$ then $\overline{\rho}_{E_{(a,b)},p}(\Frob_P)=t$ implies $(a,b)\notin W_t(x)$. Hence $W_P\subset \F_P^2\setminus \Omega_P$, which shows $\# W_P\leq (1-\omega_P)\cdot q^{2\deg(P)}$. Therefore, by the large sieve inequality,
\[
\# W_t(x) \leq \frac{q^{2\cdot\max\{x+1, x+2g\}}}{L(Q)}\ll_{q,g} \frac{q^{2x}}{L(Q)},
\]
where 
\[
L(Q)= 1+\sum_{I\in S_Q} \prod_{P|M} \frac{\omega_P}{1-\omega_P} \geq \sum_{P\in \Sigma_K(Q)} \omega_P.
\]
By Lemma \ref{lem:FiniteCheb},
\[
\omega_p=\frac{1}{p-1}+O\left(\frac{(p-1)^{3/2}}{q^{\deg(P)/2}}\right).
\]

Therefore
\begin{align*}
L(Q)
\geq \sum_{P\in \Sigma_K(Q)}\left(\frac{1}{p-1}+O\left(\frac{(p-1)^{3/2}}{q^{\deg(P)/2}}\right)\right)
\sim \frac{\# \Sigma_K(Q)}{p-1}.
\end{align*}

Hence
\[
\# W_t(x)\ll_{q} \frac{q^{2x}}{ \# \Sigma_K(Q)}.
\]

From this, together with (\ref{ppt}) and (\ref{C}), it follows that 
\begin{align*}
\frac{\# \EE_\ell(x)}{\# \CC(x)} = \frac{\# W_{t}(x)}{\# \CC(x)}
\ll_{K} \frac{1}{\#\Sigma_K(Q)}
\ll_K \frac{x}{q^{x/2}}.
\end{align*}

\end{proof}

\section{Estimating $\#\EE(x)$}

\noindent We now prove the main theorem:

\begin{proof}

By Theorem \ref{thm:uniformity}
\[
\EE(x)=\bigcup_{2\leq \ell < c(K)} \EE_\ell(x).
\]
By Lemma \ref{lem:modl} and Lemma \ref{lem:modp}
\[
\sum_{2\leq \ell< c(g)} \frac{\#\EE_\ell(x)}{\# \CC(x)} \ll_{K} \frac{x}{q^{x/2}}.
\]
\end{proof}

\subsection*{Acknowledgments}
The author thanks Doug Ulmer and Bryden Cais for their guidance and encouragement during this project, and also Nathan Jones for clarifying a remark in \cite{Jon10}.

\bibliographystyle{alpha}
\bibliography{bibfile}

\end{document}